\newtheorem{thm}[subsection]{Theorem}
\newtheorem{defn}[subsection]{Definition}
\newtheorem{claim}[subsection]{Claim}
\newtheorem{corr}[subsection]{Corollary}
\newtheorem{remark}{Remark}
\theoremstyle{definition}
\newcommand{\cat}{\mathcal}
\newcommand{\Q}{\mathbb Q}
\newcommand{\Z}{\mathbb Z}
\newcommand{\F}{\mathbb F}
\newcommand{\C}{\mathbb C}
\newcommand{\gG}{\mbox{\german g}}
\newcommand{\gC}{{\cat S}}
\DeclareMathOperator{\hocolim}{hocolim}
\DeclareMathOperator{\BK}{BK}
\DeclareMathOperator{\BH}{BH}
\DeclareMathOperator{\BN(T)}{BN(T)}
\DeclareMathOperator{\BW}{BW}
\DeclareMathOperator{\EK}{EK}
\DeclareMathOperator{\BT}{BT}
\DeclareMathOperator{\To}{T}
\DeclareMathOperator{\W}{W}
\DeclareMathOperator{\K}{K}
\DeclareMathOperator{\Hg}{H}
\DeclareMathOperator{\FFT}{N_{\mathbb{W}}(\overline{\F}_p)}
\DeclareMathOperator{\BFT}{BN(T)}
\DeclareMathOperator{\BFFT}{BN_{\mathbb{W}}(\overline{\F}_p)}
\DeclareMathOperator{\Hr}{H}
\DeclareMathOperator{\Gr}{G}
\DeclareMathOperator{\BG}{BG}
\DeclareMathOperator{\No}{N}
\newfont{\german}{eufm10}
 \DeclareMathOperator{\Tor}{Tor}
\newcommand\qu{/\kern-.7ex/}
\begin{document}
\pagestyle{plain}

\title
{On some applications of unstable Adams operations to the topology of Kac-Moody groups}
\author{Nitu Kitchloo}
\address{Department of Mathematics, Johns Hopkins University, Baltimore, USA}
\email{nitu@math.jhu.edu}
\thanks{Nitu Kitchloo is supported in part by NSF through grant DMS
  1307875.}

\date{\today}

{\abstract

\noindent
Localized at almost all primes, we describe the structure of differentials in several important spectral sequences that compute the cohomology of classifying spaces of topological Kac-Moody groups. In particular, we show that for all but a finite set of primes, these spectral sequences collapse and that there are no additive extension problems. We also describe some appealing consequences of these results. The main tool is the use of the naturality properties of unstable Adams operations on these classifying spaces.}
\maketitle

\tableofcontents

\section{Introduction}

\medskip
\noindent
The theory of Kac-Moody groups is well established at this point \cite{K1,K2, Ku}. The complex points of Kac-Moody groups form a natural extension of the class of semi-simple Lie groups, even though they need not be finite dimensional. Concepts like maximal torus, Weyl groups and root systems extend almost by definition to these groups and one may even describe the theory of highest weight representations of Kac-Moody groups along these lines. As topological groups, one may study Kac-Moody groups through algebraic invariants like cohomology \cite{Ki2}. From the standpoint of homotopy theory, the natural object related to a Kac-Moody group $\K$ is its classifying space $\BK$. The study of these classifying spaces was begun by the author in his doctoral thesis, and much work has been done in this area by several authors since. The results on the structure of the classifying spaces of Kac-Moogy groups show a striking similarity with those for Lie groups. For instance, the cohomology rings (with primary coefficients) in question are Noetherian and all the standard algebraic decomposition theorems known for classifying spaces of compact Lie groups extend to Kac-Moody groups \cite{BrK}. 

\medskip
\noindent
It was shown in \cite{Ki,Ki2}, that the space $\BK$ can be expressed as a homotopy colimit of classifying spaces of compact Lie groups denoted by $\BK_J$, where $J$ is an object in a suitable finite indexing poset. In particular, one gets a natural spectral sequence known as the Bousfield--Kan spectral sequence \cite{BK} that converges to the cohomology of $\BK$ and has an $E_2$-term given by higher inverse limits of the functor $\Hr^\ast(\BK_J)$. The purpose of this article is to show that under some very general assumptions, this spectral sequence collapses, and one can recover a lot of useful information about $\Hr^\ast(\BK)$ from this fact. 

\medskip
\noindent
The main tool used in this article is the action of the unstable Adams operations on the Bousfied--Kan spectral sequence. These operations were show to exist in \cite{F}. For the convenience of the reader, we shall describe the construction of these operations in Sections \ref{global} and \ref{local}. The main body of the article is organized as follows: In section \ref{back} we shall describe the basic background in the theory of Kac-Moody groups that is relevant in our context. This will allow us to state the main results. In Sections \ref{global} and \ref{local} we recall the construction of the unstable Adams operations, and in Section \ref{BKSS} we apply these operations to prove the main results. 

\medskip
\noindent
Before we begin, we would like to thank all the participants of ``Beyond $p$-compact groups", for stimulating our interest in these questions. Particular thanks to Natalia Castellana and Jesper Grodal for the invitation to participate. The author also wishes to thank an anonymous referee for helpful feedback. Finally, the author would like to thank Haynes Miller for this constant interest in this project and also for pointing out the need to be explicit about the construction given in Section \ref{local}.

\section{Background and Statement of results} \label{back}

\medskip
\noindent
In the theory of Kac-Moody groups, one begins with a finite integral matrix $A = (a_{i,j})_{i,j \in I}$ with the properties that $a_{i,i} = 2$ and $a_{i,j} \leq 0$ for $i\neq j$. Moreover, we demand that $a_{i,j} = 0$ if and only if $a_{j,i} = 0$. These conditions define a generalized Cartan Matrix. A generalized Cartan matrix is said to be symmetrizable if it becomes symmetric after multiplication with a suitable rational diagonal matrix. 

\noindent
Given a generalized Cartan matrix $A$, one may construct a complex Lie algebra $\gG (A)$ using the Harishchandra-Serre relations. The structure theory for the highest weight representations of $\gG(A)$ leads to a construction (in much the same way that Chevalley groups are constructed), of a topological group $\Gr(A)$ called the (minimal, split) Kac-Moody group over the complex numbers. The group $\Gr(A)$ supports a canonical anti-linear involution $\omega$, and one defines the unitary form $\K(A)$ as the fixed group $\Gr(A)^{\omega}$. We refer the reader to \cite{Ku} for details.

\medskip
\noindent
Given a subset $J \subseteq I$, one may define a parabolic subalgebra $\gG_J(A) \subseteq \gG(A)$. One may exponentiate these subalgebras to parabolic subgroups $\Gr_J(A) \subset \Gr(A)$. We then define the unitary Levi factors $\K_J(A)$ to be the groups $\K(A) \cap \Gr_J(A)$. Let $\Hg_J(A)$ denote the complexification of $\K_J(A) \subset \Gr_J(A)$. Hence the Lie algebra of $\Hg_J(A)$ corresponds to the semi simple factor in $\gG_J(A)$. Notice that $\K_{\varnothing} (A) = \To$ is a torus of rank $2|I| - rk(A)$, called the maximal torus of $\K(A)$. Indeed, it is the common maximal torus to all the groups $\K_J(A)$. The normalizer $\No(\To)$ of $\To$ in $\K(A)$, is an extension of a discrete group $W(A)$ by $\To$. The Weyl group $\W(A)$ has the structure of a crystallographic Coxeter group generated by reflections $r_i, i \in I$. For $J \subseteq I$, let $\W_J(A)$ denote the subgroup generated by the corresponding reflections $r_j, j \in J$. The group $\W_J(A)$ is a crystallographic Coxeter group in its own right that can be identified with the Weyl group of $\K_J(A)$. 

\medskip
\noindent
The inclusions $\K_J(A) \subset \Hg_J(A) \subset \Gr_J(A)$ are homotopy equivalences for all $J$. In this article we will study the classifying space of the topological group $\K(A)$. Some of the arithmetic arguments we use will require replacing $\K(A)$ with $\Gr(A)$. This does not pose any problem since $\Gr(A)$ and $\K(A)$ are homotopy equivalent. 

\medskip
\noindent
Given a generalized Cartan matrix $A = (a_{i,j})_{i,j \in I}$, define a category $\gC(A)$ to be the poset category (under inclusion) of subsets $J \subseteq I$ such that $\K_J(A)$ is a compact Lie group. This is equivalent to demanding that $\W_J(A)$ is a finite group. Notice that $\gC(A)$ contains all subsets of $I$ of cardinality less than two. In particular, $\gC(A)$ is nonempty and has an initial object given by the empty set. However, $\gC(A)$ need not have a terminal object unless $\K(A)$ itself is a compact Lie group. The category $\gC(A)$ is also known as the poset of spherical subsets \cite{D}. 

\medskip
\noindent
The topology on the group $\K(A)$ is the strong topology generated by the compact subgroups $\K_J(A)$ for $J \in \gC(A)$ \cite{Ki2,Ku}. More precisely, $\K(A)$ is the amalgamated product of the compact Lie groups $\K_J(A)$, in the category of topological groups. For a arbitrary subset $L \subseteq I$, the topology induced on the homogeneous space of the form $\K(A)/\K_L(A)$ makes it into a CW-complex, with only even cells, indexed by the set of cosets $\W(A)/\W_L(A)$. Furthermore, the projection maps $\K(A)/\K_L(A) \rightarrow \K(A)/\K_J(A)$ are cellular for $L \subseteq J$.

\begin{defn} Define the (finite-type) Topological Tits building $X(A)$ as the $K(A)$-space:
\[ X(A) =  {\hocolim}_{J \in \gC(A)} \;  \K(A)/\K_J(A).  \]
Similarly, define the (spherical-type) Davis complex, $D(A)$ to be the $\W(A)$-space \cite{D}:
\[ D(A) = {\hocolim}_{J \in \gC(A)} \; \W(A)/\W_J(A). \]
\end{defn}
\noindent 
Notice that by construction, $X(A)$ is a $\K(A)$-CW complex such that all the isotropy subgroups are compact Lie groups. It is well known \cite{Ki} that the space $X(A)$ is equivalent to the classifying space $\EK(A)$ for proper $\K(A)$-actions. Similarly, the Davis complex $D(A)$ is a model for the classifying space for proper $\W(A)$-actions, as can be seen by identifying it with the $\To$-fixed set of $X(A)$ (also see \cite{Ki}). In particular, the spaces $X(A)$ and $D(A)$ are contractible (see remark \ref{RBKX} below). 

\medskip
\noindent
Taking homotopy orbits of the $\K(A)$ and $\W(A)$-action on the spaces $X(A)$, and $D(A)$ respectively we get homotopy decompositions:
\[ \BK(A) = \hocolim_{J \in \gC(A)} \; \BK_J(A), \quad \quad \BW(A) = \hocolim_{J \in \gC(A)} \; \BW_J(A). \]

\noindent
Our main theorem is about the Bousfield--Kan spectral sequence induced by the homotopy decomposition for $\BK(A)$:
\begin{thm} \label{T1}
Let $q$ be a prime so that $\Hr^\ast(\BK_J(A);\Z)$ has no $q$-torsion for all $J \in \gC(A)$, then in the Bousfield--Kan spectral sequence:
\[ {\lim}^i \Hr^j(\BK_J(A); \Z_{(q)}) \Rightarrow \Hr^{i+j}(\BK(A); \Z_{(q)}), \]
the differential $d_k$ is trivial if $k$ is even. In addition, $d_{2r-1}$ is trivial unless $(q-1)$ divides $(r-1)$. In particular, the first possible non-trivial differential is $d_{2q-1}$. The above result also holds with coefficients in any $\Z_{(q)}$-module. 
\end{thm}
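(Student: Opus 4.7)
The strategy is to exploit the naturality of the unstable Adams operations $\psi^k$ with respect to the homotopy colimit decomposition
\[ \BK(A) = \hocolim_{J \in \gC(A)} \BK_J(A), \]
converting their multiplicative eigenvalue behavior on cohomology into constraints on the differentials of the Bousfield--Kan spectral sequence.

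First, I would invoke the construction of section 3 to obtain, for every integer $k$ coprime to $q$, an unstable Adams operation acting on the diagram $J \mapsto \BK_J(A)$, inducing a self-map of the entire spectral sequence that commutes with all differentials. Under the hypothesis that $\Hr^\ast(\BK_J(A); \Z)$ is $q$-torsion free for every $J \in \gC(A)$, each $\Hr^\ast(\BK_J(A); \Z_{(q)})$ is a polynomial algebra concentrated in even degrees, and $\psi^k$ acts on $\Hr^{2j}(\BK_J(A); \Z_{(q)})$ as multiplication by $k^j$. Higher derived inverse limits are functorial, so on the $E_2$-page
\[ E_2^{s,t} = {\lim}^s \Hr^t(\BK_J(A); \Z_{(q)}), \]
the operation $\psi^k$ acts by $k^{t/2}$ when $t$ is even, while $E_2^{s,t} = 0$ for $t$ odd.

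Second, with the convention $d_r : E_r^{s,t} \to E_r^{s+r,\, t-r+1}$, any $d_r$ with $r$ even sends an even row to an odd row; since the vanishing of odd rows persists to every page, $d_{2m} = 0$. For $r = 2\rho - 1$ odd, $d_r$ restricted to $E_r^{s,2j}$ lands in $E_r^{s+r,\, 2(j - \rho + 1)}$, and $\psi^k$-equivariance of $d_r$ forces
\[ k^{j-\rho+1}\bigl(k^{\rho-1} - 1\bigr)\, d_r = 0. \]
Since $k \in \Z_{(q)}^{\times}$, the factor $k^{j-\rho+1}$ is a unit; if $(q-1) \nmid (\rho - 1)$, choosing $k$ to be a primitive root modulo $q$ makes $k^{\rho-1} - 1$ also a unit in $\Z_{(q)}$, whence $d_r = 0$. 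The least $\rho \geq 2$ for which $(q-1) \mid (\rho - 1)$ is $\rho = q$, corresponding to the first possibly nontrivial differential $d_{2q-1}$.

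The argument extends verbatim to coefficients in an arbitrary $\Z_{(q)}$-module $M$, since the scalars produced are units in $\Z_{(q)}$ and therefore act as automorphisms of $M$. The main obstacle I anticipate is technical rather than conceptual: establishing that the unstable Adams operations genuinely lift to a coherent action on the whole diagram $J \mapsto \BK_J(A)$ (and hence on every page of the Bousfield--Kan spectral sequence), rather than merely on each $\BK_J(A)$ separately, and that they act on cohomology with the claimed eigenvalues. This diagrammatic coherence is precisely what section 3 is devoted to.
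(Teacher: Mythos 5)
Your proposal is correct and takes essentially the same route as the paper: equivariance of the differentials under a coherent unstable Adams operation, acting by $k^{j}$ on the even rows, yields $k^{\,j-\rho+1}\bigl(k^{\rho-1}-1\bigr)d_{2\rho-1}=0$, and choosing the operation to correspond to a primitive root mod $q$ kills $d_{2\rho-1}$ unless $(q-1)\mid(\rho-1)$, with even differentials vanishing because the odd rows are zero. The one imprecision is your claim of operations for \emph{every} $k$ coprime to $q$: the construction of section 3 only provides coherent operations for primes $p$ such that $\W(A)$ has no $p$-torsion, but by Dirichlet one can choose such a prime that is also a primitive root mod $q$, which is all your argument actually uses.
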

\noindent
Let $\K(A)$ be a Kac-Moody group with generalized Cartan matrix indexed on a set $I$ of cardinality $n$. Then the Bousfield--Kan spectral sequence only supports differentials $d_k$ for $k < n$ (for dimensional reasons), we will show:
\begin{corr} \label{C1}
Let $A$ be a generalized Cartan matrix indexed on a set $I$ of cardinality $n$. Assume that $\Hr^\ast(\BK_J(A);\Z)$ has no $q$-torsion for all $J \in \gC(A)$, and that $2q \geq n+1$. Then the Bousfield--Kan spectral sequence collapses with coefficients in the ring $\Z_{(q)}$. Furthermore, there are no additive extension problems. \footnote{Note: the term $E_2^{i,j}$ in the spectral sequence may be nontrivial for $i > 0$, and may contain $q$-torsion!} 
\end{corr}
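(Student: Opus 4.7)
The plan is to combine Theorem~\ref{T1} with the unstable Adams operations on the homotopy decomposition to split the filtration on the abutment. First, by Theorem~\ref{T1}, the first candidate for a nontrivial differential is $d_{2q-1}$, and every later candidate $d_{2r-1}$ (requiring $(q-1)\mid(r-1)$) satisfies $r \geq q$, hence $2r-1 \geq 2q-1$. The hypothesis $2q \geq n+1$ gives $2q-1 \geq n$, so by the dimensional bound cited just before the corollary --- the spectral sequence supports nontrivial $d_k$ only for $k < n$ --- every candidate vanishes. The spectral sequence thus collapses at $E_2$.

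Next, I would choose an integer $k$ coprime to $q$ whose reduction in $\F_q^\times$ is a primitive root. By Section~3, the unstable Adams operation $\psi^k : \BK(A) \to \BK(A)$ is natural for the homotopy decomposition $\BK(A) = \hocolim_{J \in \gC(A)} \BK_J(A)$ and restricts on each $\BK_J(A)$ to the classical Adams operation. The torsion-freeness hypothesis makes $\Hr^\ast(\BK_J(A); \Z_{(q)})$ a polynomial ring on even-degree generators, so $\psi^k$ acts on $\Hr^{2m}(\BK_J(A); \Z_{(q)})$ by $k^m$. Consequently $\psi^k$ acts on $E_r^{i,j}$ by $k^{j/2}$ (with $j$ necessarily even), descends to act by $k^{(m-i)/2}$ on $E_\infty^{i, m-i}$, and preserves the induced filtration on $\Hr^m(\BK(A); \Z_{(q)})$.

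Finally, from the collapse, the associated graded of this filtration is $\bigoplus_i E_\infty^{i, m-i}$, with $\psi^k$ acting by a different scalar on each piece. I would decompose $\Hr^m(\BK(A); \Z_{(q)})$ into generalized $\psi^k$-eigenspaces; this gives a canonical additive splitting of the filtration provided the eigenvalues $k^{(m-i)/2}$ pairwise differ by units of $\Z_{(q)}$, equivalently $(q-1) \nmid (i-i')/2$ for relevant $i \neq i'$. The index $i$ ranges over values in $\{0,1,\dots,n-1\}$ of the same parity as $m$, so $|(i-i')/2| \leq (n-1)/2 \leq q-1$ under the hypothesis $2q \geq n+1$. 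The main obstacle is the boundary case $(i-i')/2 = \pm(q-1)$, which forces $n = 2q-1$, $m$ even, and $\{i,i'\} = \{0, 2q-2\}$; I expect to resolve it by introducing a supplementary Adams operation $\psi^{k'}$ that separates the two extremal pieces, or by a sharper analysis of $\gC(A)$ in this specific configuration. Granted the eigenvalue distinctness, the generalized eigenspace decomposition produces a canonical isomorphism $\Hr^m(\BK(A); \Z_{(q)}) \cong \bigoplus_i E_\infty^{i, m-i}$, so there are no additive extension problems.
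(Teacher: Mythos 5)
Your collapse argument is exactly the paper's: Theorem \ref{T1} pushes the first candidate differential to $d_{2q-1}$, and $2q-1\geq n$ kills it by the dimensional bound. The extension argument, however, is a different route from the paper and it has a genuine gap that your own parenthetical remark locates but does not close. The eigenvalue-splitting method needs the scalars $k^{j/2}$ and $k^{j'/2}$ to differ by a unit of $\Z_{(q)}$ for every pair of filtration degrees occurring in a fixed total degree, i.e.\ $(q-1)\nmid (j-j')/2$. The hypothesis $2q\geq n+1$ permits $n=2q-1$, in which case the columns $i=0$ and $i=n-1=2q-2$ can both contribute to an even total degree, and then $(j-j')/2=q-1$. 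Your proposed repair --- a supplementary operation $\psi^{k'}$ --- cannot work: by Fermat's little theorem $(k')^{q-1}\equiv 1 \pmod q$ for \emph{every} $k'$ prime to $q$, so no choice of Adams operation separates these two extremal pieces; the obstruction is intrinsic to the method, not to the choice of $k$. (A smaller point: the operations constructed in Section~3 exist only for primes $p$ with no $p$-torsion in $\W(A)$, so ``an integer $k$'' should be ``a prime $p$ that is a primitive root mod $q$ and avoids the torsion of $\W(A)$,'' which Dirichlet supplies; this is how the paper argues in the proof of Theorem \ref{T1}.)

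The paper avoids this entirely by a universal-coefficient comparison rather than a splitting: since Theorem \ref{T1} (hence the collapse) holds with coefficients in any $\Z_{(q)}$-module, one has collapse with both $\Z_{(q)}$ and $\F_q$ coefficients; because the $E_1$-complex consists of free $\Z_{(q)}$-modules, the UCT expresses $\Hr^k(\BK(A),\F_q)$ as an extension of $\bigoplus \Tor(E_\infty^{i,j}(\Z_{(q)}),\F_q)$ by $\bigoplus E_\infty^{i,j}(\Z_{(q)})\otimes\F_q$, and comparing $\F_q$-dimensions with the UCT sequence for $\Hr^\ast(\BK(A),\Z_{(q)})$ shows any nontrivial extension in the $E_\infty$-term would inflate both $\otimes\F_q$ and $\Tor$ terms, contradicting equality of the middle terms. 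This counting argument is insensitive to which columns carry $q$-torsion and needs no eigenvalue separation, so it covers the boundary case $n=2q-1$ where your approach stalls. Either supply an argument of this kind for the extremal pair of columns, or restrict your method to $2q\geq n+2$; as written, the proposal does not prove the corollary in the stated generality.
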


\noindent
The inclusion map of the maximal torus $\BT \longrightarrow \BK(A)$ induces a map in cohomology $\Hr^\ast(\BK(A)) \longrightarrow \Hr^\ast(\BT)$. We have:
\begin{corr} \label{C2}
Assume that $\Hr^\ast(\BK_J(A);\Z)$ has no $q$-torsion for all $J \in \gC(A)$, and that the Bousfield--Kan spectral sequence collapses with coefficients in $\Z_{(q)}$. Then the restriction map $\Hr^\ast(\BK(A);\Z_{(q)}) \longrightarrow \Hr^\ast(\BT;\Z_{(q)})$ has image given by the Weyl invariants $\Hr^\ast(\BT,\Z_{(q)})^{\W(A)}$. Furthermore, the kernel of the restriction map is the ideal of nilpotent elements in $\Hr^\ast(\BK(A); \Z_{(q)})$. The above result also holds with coefficients in any $\Z_{(q)}$-module. 
\end{corr}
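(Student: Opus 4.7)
The plan is to compare the Bousfield--Kan spectral sequence computing $\Hr^*(\BK(A); \Z_{(q)})$, whose collapse we are assuming, with the analogous Bousfield--Kan spectral sequence for the constant $\gC(A)$-diagram with value $\BT$. Since $\gC(A)$ has the empty set as initial object, its nerve is contractible, and the higher inverse limits of any constant functor on $\gC(A)$ vanish; hence the comparison spectral sequence is concentrated in the column $p=0$ and degenerates trivially, with abutment $\Hr^*(\BT; \Z_{(q)})$. The natural transformation of diagrams $\BT \hookrightarrow \BK_J(A)$ obtained from the inclusion of the common maximal torus induces a morphism of spectral sequences; on the abutment it recovers the restriction map $r\colon \Hr^*(\BK(A); \Z_{(q)}) \lra \Hr^*(\BT; \Z_{(q)})$.

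For the image part, the no $q$-torsion hypothesis (inherited from the standing setup) together with Borel's theorem identifies each $\Hr^*(\BK_J(A); \Z_{(q)})$ with the invariants $\Hr^*(\BT; \Z_{(q)})^{\W_J(A)}$, compatibly with the restriction maps in the diagram. Taking inverse limits over $J \in \gC(A)$, and using that the singleton sets $\{i\}$ already contribute all simple reflections generating $\W(A)$, yields
\[
\lim_{J \in \gC(A)} \Hr^*(\BK_J(A); \Z_{(q)}) \;=\; \bigcap_{J} \Hr^*(\BT; \Z_{(q)})^{\W_J(A)} \;=\; \Hr^*(\BT; \Z_{(q)})^{\W(A)}.
\]
Under the assumed collapse, the edge homomorphism $\Hr^*(\BK(A); \Z_{(q)}) \twoheadrightarrow E_\infty^{0,*} = \lim_J \Hr^*(\BK_J(A); \Z_{(q)})$ is surjective, and the spectral sequence comparison identifies $r$ with this edge map followed by the natural inclusion into $\Hr^*(\BT; \Z_{(q)})$. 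Hence the image of $r$ is exactly $\Hr^*(\BT; \Z_{(q)})^{\W(A)}$.

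For the kernel statement, $\Ker(r)$ coincides with the first stage $F^1 \Hr^*(\BK(A); \Z_{(q)})$ of the decreasing filtration coming from the spectral sequence. Because $\gC(A)$ is a finite poset whose nerve has dimension at most $|I|$, higher limits vanish above some integer $N$, so $F^{N+1} = 0$. The Bousfield--Kan filtration is multiplicative, so $x \in F^1$ implies $x^{N+1} \in F^{N+1} = 0$, and every element of $\Ker(r)$ is nilpotent. Conversely, if $r(x) \neq 0$, then since $q$ is odd, $\Hr^*(\BT; \Z_{(q)}) = \Z_{(q)}[t_1, \ldots, t_n]$ is an integral domain, so $r(x^k) = r(x)^k \neq 0$ for all $k$ and $x$ is not nilpotent. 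The variant for an arbitrary $\Z_{(q)}$-module $M$ follows by naturality of the entire argument in the coefficient module. The main subtlety in executing this plan is the naturality check guaranteeing that the edge-map composition genuinely recovers the topological restriction to $\BT$ on the abutment, together with the verification that the inverse system underlying $\lim_J$ is exactly the restriction-of-Weyl-invariants system, so that the intersection identification is legitimate.
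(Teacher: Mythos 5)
Your proof is correct and follows essentially the same route as the paper: factor the restriction map through the edge homomorphism (surjective by collapse), identify ${\lim}^0 \Hr^\ast(\BK_J(A);\Z_{(q)})$ with $\Hr^\ast(\BT;\Z_{(q)})^{\W(A)}$ via invariants of the simple reflections, and use the finite, multiplicative Bousfield--Kan filtration to show every kernel element is nilpotent. Your comparison with the constant-$\BT$ diagram and your explicit integral-domain argument for the reverse inclusion (nilpotents lie in the kernel, which the paper leaves implicit) are minor elaborations rather than a different approach.
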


\begin{remark}
It is easy to show that the Weyl group $\W(A)$ has no elements of $q$-torsion for $q > n+1$ \cite{Ki3}. In particular, the same holds for the subgroups $\W_J(A)$ for all $J \in \gC(A)$. For such primes, it is well know that $\Hr^\ast(\BK_J(A),\Z)$ has no $q$-torsion, and so all the results stated above apply. 
\end{remark}

\noindent
In \cite{Ki2} and \cite{Ki3} we showed that the map $\BN(T) \longrightarrow \BK(A)$ induces a chomology isomorphism with coefficients in $\Z_{(q)}$ for all primes $q$ so that $\W(A)$ has no elements of $q$-torsion. We use this fact to show:
\begin{corr} \label{C3}
Let $q$ be a prime so that $W(A)$ has no elements of $q$-torsion. Consider the Bousfield--Kan spectral sequence with coefficients in $\Z_{(q)}$, converging to $\Hr^\ast(\BK(A), \Z_{(q)})$, and the Serre spectral sequence for the fibration:
\[ \BT \longrightarrow \BN(T) \longrightarrow \BW(A). \]
Then the collapse of any one of these spectral sequences implies the collapse of the other. Furthermore, one may identify the $E_2$-terms: ${\lim}^i \Hr^j(\BK_J(A), \Z_{(q)})$ with $\Hr^i(\W(A), \Hr^j(\BT, \Z_{(q)}))$. The above result also holds with coefficients in any $\Z_{(q)}$-module. 
\end{corr}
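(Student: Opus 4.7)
The plan has two main stages: match the $E_2$-pages of the two spectral sequences, and then transfer collapse between them through their common abutment.

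For the first stage, the hypothesis that $\W(A)$ is $q$-torsion free forces every $\W_J(A)$ with $J \in \gC(A)$ to have order invertible in $\Z_{(q)}$, so the classical Borel computation gives $\Hr^\ast(\BK_J(A);\Z_{(q)}) \cong \Hr^\ast(\BT;\Z_{(q)})^{\W_J(A)}$. The Bousfield--Kan $E_2$-page then takes the form
\[
E_2^{i,j} \;=\; {\lim}^i_{J \in \gC(A)} \, \Hr^j(\BT;\Z_{(q)})^{\W_J(A)}.
\]
To identify this with the Serre $E_2$-page $\Hr^i(\W(A); \Hr^j(\BT;\Z_{(q)}))$ I would use the Davis complex: $D(A) = \hocolim_J \W(A)/\W_J(A)$ is a $\W(A)$-CW model for $\underline{E}\W(A)$, the classifying space for proper $\W(A)$-actions. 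For any $\W(A)$-module $M$, the $\W(A)$-equivariant Bousfield--Kan SS for this hocolim presentation identifies $\lim^i_J M^{\W_J(A)}$ with the Bredon cohomology $\Hr^i_{\W(A)}(D(A); \underline{M})$ of the coefficient system $\W(A)/H \mapsto M^H$. Since $D(A)$ is contractible and every isotropy $\W_J(A)$ has order invertible in $\Z_{(q)}$, the Cartan--Leray (isotropy) spectral sequence degenerates and further identifies this with $\Hr^i(\W(A); M)$. Specializing to $M = \Hr^j(\BT;\Z_{(q)})$ gives the desired $E_2$-matching.

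For the second stage, I would invoke the cohomology equivalence $\BFT \to \BK(A)$ from \cite{Ki2,Ki3} to identify the two abutments. With a common $E_2$-page and a common target, a length-accounting argument transfers collapse: each $E_\infty^{p,q}$ is a subquotient of $E_2^{p,q}$, and the total length over $p+q=n$ is fixed by the common abutment. Collapse of one SS forces each of its $E_\infty^{p,q}$ to have maximal length; since the $E_2$-pages agree, the other SS's $E_\infty^{p,q}$ must also attain maximal length, so it too collapses. The same reasoning extends to any $\Z_{(q)}$-module of coefficients, provided the $E_2$-terms are $q$-locally finitely generated in each bidegree.

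The main obstacle will be the identification $\lim^i_J M^{\W_J(A)} \cong \Hr^i(\W(A); M)$. This assembles three standard ingredients---that $D(A)$ is a model for $\underline{E}\W(A)$, that the $\W(A)$-equivariant Bousfield--Kan SS for its hocolim presentation computes Bredon cohomology, and that the Cartan--Leray SS from Bredon to group cohomology degenerates under invertibility of isotropy orders---into a single statement. Once these are in place, the remainder of the argument is essentially formal.
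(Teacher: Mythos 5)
Your route is essentially the paper's. For the $E_2$-identification the paper argues exactly what you propose, in a slightly different packaging: Borel's theorem gives $\Hr^\ast(\BK_J(A);R)=\Hr^\ast(\BT;R)^{\W_J(A)}$ since $|\W_J(A)|$ is invertible, and the identification ${\lim}^i_{J} M^{\W_J(A)}\cong \Hr^i(\W(A);M)$ is obtained from the homotopy decomposition $\BW(A)=\hocolim_{J\in\gC(A)}\BW_J(A)$ (homotopy orbits of the contractible Davis complex $D(A)$), whose Bousfield--Kan spectral sequence ${\lim}^i \Hr^j(\W_J(A);M)\Rightarrow \Hr^{i+j}(\W(A);M)$ collapses onto the invariants because $\Hr^{>0}(\W_J(A);M)=0$. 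Your formulation via Bredon cohomology of $D(A)$ with the fixed-point coefficient system, plus the isotropy spectral sequence, is an equivalent way of carrying out the same step, not a genuinely different argument. Likewise, the paper identifies the abutments through the $\Z_{(q)}$-cohomology equivalence $\BFT\to\BK(A)$ and then simply asserts that identical $E_2$-terms with identical targets force simultaneous collapse.

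The one point to flag is your justification of that last step. The claim that the total length in each total degree is fixed by the common abutment is false over $\Z_{(q)}$: the associated graded of a finite filtration of a finitely generated $\Z_{(q)}$-module is not an invariant of the module (filtering $\Z_{(q)}$ by $q\Z_{(q)}\subset\Z_{(q)}$ gives associated graded $\Z_{(q)}\oplus\Z/q$, while the trivial filtration gives $\Z_{(q)}$), and your finite-generation caveat does not repair this, since free summands have infinite length. So the counting argument is airtight only when the $E_2$-entries have finite length (for instance with field or torsion coefficients), whereas the relevant $E_2$ may mix free parts with $q$-torsion in the columns $i>0$, as the paper's own footnote emphasizes. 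In fairness, the paper offers no more justification than you do at this point, so you have reproduced its argument; but if you want the transfer of collapse to be watertight for arbitrary $\Z_{(q)}$-module coefficients you should either reduce to coefficients where the entries have finite length or upgrade the $E_2$-identification to a comparison of spectral sequences compatible with differentials, rather than relying on abstract bookkeeping of subquotients of a common abutment.
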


\begin{remark} \label{RBKX}
The reader may inquire if the Bousfield-Kan spectral sequence for the topological space $X(A)$ collapses. Indeed, we show in claim \ref{BKX} that this spectral sequence collapses at the $E_2$-term with any coefficients.
\end{remark}

\section{The unstable Adams operations} \label{global}

\medskip
\noindent
Given a compact connected Lie group $\K$, let $p$ be a prime which does not divide the order of the Weyl group $\W(\K)$. Recall \cite{JMO} that there is a unique map (up to homotopy): $\psi : \BK \longrightarrow \BK$, with the property that the induced map $\psi^\ast : \Hr^{2k}(\BK, \Q) \longrightarrow \Hr^{2k}(\BK, \Q)$ is given by multiplication by $p^k$. The map $\psi$ goes by the name of {\em Unstable Adams operation for the prime $p$}. Let us begin this section by reproving the following result:

\begin{thm} \cite{F} Let $\K(A)$ denote a Kac-Moody group corresponding to the Cartan matrix $A$. Let $p$ be a prime so that the Weyl group: $\W(A)$ contains no elements of order $p$. Then there exists a family of unstable Adams operations for the prime $p$: $\psi_J: \BK_J(A) \longrightarrow \BK_J(A)$ which are compatible (not just up to homotopy) with respect to the inclusions $J \leq L$ in $\gC(A)$. In particular, one has a global map $\psi : \BK(A) \longrightarrow \BK(A)$ compatible with the maps $\psi_J$ under the inclusion $\BK_J(A) \longrightarrow \BK(A)$. 
\end{thm}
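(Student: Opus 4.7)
The plan is to construct the $\psi_J$ starting from the common maximal torus $\To \subset \K(A)$, exploiting that the $p$-th power map on $\To$ is rigidly functorial under every Weyl action in sight. The map $[p]\colon \To \to \To$ is a strictly functorial self-map, equivariant under any automorphism of $\To$, and in particular under $\W(A)$ and each of its subgroups $\W_J(A)$. Passing to classifying spaces yields a strict self-map $B[p]\colon \BT \to \BT$ compatible with the full Weyl action, and hence a strictly compatible family of self-maps $BN_J(\To) \to BN_J(\To)$ for $J \in \gC(A)$, where $N_J(\To)$ denotes the normalizer of $\To$ in $\K_J(A)$.

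For each $J \in \gC(A)$, the hypothesis $p \nmid |\W(A)|$ forces $p \nmid |\W_J(A)|$, so the natural map $BN_J(\To) \to \BK_J(A)$ is an isomorphism on $\Z_{(p)}$-cohomology and a homotopy equivalence after $p$-completion. By \cite{JMO}, an unstable Adams operation $\psi_J\colon \BK_J(A) \to \BK_J(A)$ for the prime $p$ exists (in the $p$-complete or $p$-local setting), is unique up to homotopy, and is characterized by the property that its restriction to $\BT$ agrees up to homotopy with $B[p]$. This gives the pointwise existence of each $\psi_J$; what remains is strict naturality under the inclusions $J \subseteq L$ in $\gC(A)$.

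To upgrade to a strict natural transformation, I would work in the model category of $\gC(A)$-shaped diagrams of $p$-complete spaces and transport the strict self-endomorphism of the diagram $BN_\bullet(\To)$ given by $[p]$ across the natural transformation $BN_\bullet(\To) \to \BK_\bullet(A)$, which is a pointwise equivalence at $p$. The obstructions to constructing a coherent lift lie in higher inverse limits over $\gC(A)$ of homotopy groups of self-mapping spaces $\Map(\BK_J(A), \BK_J(A))_{\psi_J}$, and the Lannes-theoretic mapping-space calculations of \cite{JMO} combined with the hypothesis that $\W(A)$ is $p$-torsion free make these obstruction groups vanish. A strictly compatible family therefore exists, and the homotopy-colimit decomposition $\BK(A) = \hocolim_J \BK_J(A)$ assembles it into a global map $\psi\colon \BK(A) \to \BK(A)$ compatible with each $\psi_J$ under the inclusion $\BK_J(A) \to \BK(A)$.

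The main obstacle is this last step: promoting the pointwise $\psi_J$ furnished by \cite{JMO} to a strictly natural family over $\gC(A)$, so that an honest global self-map of the homotopy colimit can be assembled rather than merely a coherent one up to higher homotopies. The hypothesis that $\W(A)$ has no $p$-torsion is used precisely to kill the higher-limit obstructions to this rigidification; on the torus level everything is tautologically strict, so once the obstruction-theoretic lift is secured, the global map $\psi$ emerges formally from the $\hocolim$ presentation of $\BK(A)$.
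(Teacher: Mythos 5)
Your route is genuinely different from the paper's (which builds a strictly functorial model out of Tits' group functor over the Witt vectors $\mathbb{W}(\overline{\F}_p)$, the Frobenius, and an arithmetic fracture square), but as written it has two real gaps. First, the opening step is not ``tautologically strict'': the $p$-th power map $[p]$ on $\To$ is equivariant for conjugation, but that does not make it extend to a group endomorphism of the normalizer $N_J(\To)$, which is in general a non-split extension $1 \to \To \to N_J(\To) \to \W_J(A) \to 1$. An endomorphism restricting to $[p]$ on $\To$ and covering the identity on $\W_J(A)$ exists only if $(p-1)$ annihilates the extension class in $\Hr^2(\W_J(A);\To)$, and the hypothesis $p \nmid |\W_J(A)|$ does not give this. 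This is exactly the difficulty the paper sidesteps by working with the discrete lift $\FFT$ generated by Tits' group $\tilde{\W}_{\mathbb{W}(\overline{\F}_p)}(A)$ and the torsion of the torus, on which the Frobenius is an honest, strictly functorial homomorphism agreeing with the degree-$p$ map on $\To$. Second, your rigidification step is asserted rather than proved: the obstruction groups are higher limits over $\gC(A)$ of homotopy groups of mapping spaces whose components are governed by (completed) centers of the groups $\K_J(A)$, and neither the mapping-space calculations of \cite{JMO} nor the absence of $p$-torsion in $\W(A)$ gives their vanishing; \cite{JMO} computes self-maps of a single $\BG$, not $\lim^i$ over $\gC(A)$.

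Even if both points were repaired, your construction lives entirely at the prime $p$: the comparison $B N_J(\To) \to \BK_J(A)$ is an equivalence only after $p$-completion (or $\Z_{(p)}$-localization), so the output would be a self-map of $\BK(A)\hat{}_p$, not of $\BK(A)$ as the theorem asserts. The integral statement is essential for the paper's applications, where the operation for the prime $p$ acts on the Bousfield--Kan spectral sequence with $\Z_{(q)}$-coefficients for primes $q \neq p$; a $p$-complete operation carries no information there. Recovering a genuine self-map of $\BK(A)$ requires gluing the $p$-adic datum with compatible data at the other primes and rationally, and this arithmetic gluing is precisely the content of the paper's proof: the Frobenius on $\Gr_{\mathbb{W}(\overline{\F}_p)}(A)$ supplies the completion away from $p$, the degree-$p$ map on the normalizer supplies the $p$-adic and rational datum, and strict compatibility in $J$ comes for free from naturality in the root datum, with no obstruction theory needed. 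Your proposal omits this gluing entirely, so it cannot yield the stated family $\psi_J$ on the spaces $\BK_J(A)$ themselves, nor the global $\psi$ on $\BK(A)$.
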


\begin{remark}
We call $\psi$ the unstable Adams operation on $\BK(A)$ for the prime $p$. Note that we do not claim that $\psi$ is unique up to homotopy, or that it is diagonal with respect to homogeneous decomposition of $\Hr^\ast(\BK(A), \Q)$. 
\end{remark}

\medskip
\noindent
To construct the global unstable Adams operation $\psi$, we will invoke results of Tits \cite{T}, and proceed along the lines described by the work of Friedlander-Mislin \cite{FM}. Recall that Tits has constructed a functor $\Gr_R(A)$ from the category of commutative rings to (discrete) groups, that depends on the root datum defined by the generalized Cartan matrix $A$, and yields the (minimal, split) Kac-Moody group when the ring $R$ is a field. This construction also gives rise to a functor $\Hg_{J,R}(A)$ that realizes the group $\Hg_{J}(A)$ as the complex points (endowed with a suitable topology). 

\medskip
\noindent
Now let $p$ be a prime so that $\W(A)$ contains no element of order $p$. Let $\mathbb{W}(\overline{\F}_p)$ denote the ring of Witt vectors over the algebraic closure $\overline{\F}_p$ of $\F_p$. We fix once and for all, an embedding of $\mathbb{W}(\overline{\F}_p)$ in the complex numbers $\C$. In particular, one obtains a map:
\[ \Gr_{\mathbb{W}(\overline{\F}_p)}(A) \longrightarrow \Gr_{\C}(A) \rightarrow \Gr(A), \]
with the second map above being a continuous bijection of groups.

\medskip
\noindent
Recall that Tits \cite{T} has constructed a functorial group $\tilde{\W}_R(A)$ that lifts the Weyl group in $\Gr_R(A)$. We use this to define the group $\FFT \subset \Gr_{\mathbb{W}(\overline{\F}_p)}(A)$ to be the group generated by $\tilde{\W}_{\mathbb{W}(\overline{\F}_p)}(A)$, and the torsion subgroup of the maximal torus $\To_{\mathbb{W}(\overline{\F}_p)}\subset \Gr_{\mathbb{W}(\overline{\F}_p)}(A)$. Notice that the $\FFT$ maps naturally to $\No(\To)$ under the chosen embedding. In addition, the Frobenius map agrees with the degree $p$-map on $\To$, through this embedding.

\medskip
\noindent
Let us now define the space $\BN(T)_\psi$ via the pullback:
\[
\xymatrix{
\BN(T)_\psi     \ar[d] \ar[r]^{\phi_1  \quad \quad \quad} & \BFFT \hat{}_{\Q_q} \times \BFT \hat{}_{\Q_p} \ar[d] \\
\BFT_\Q    \ar[r] & \BFT \hat{}_\Q,}
\]
where $\BFT\hat{}_\Q$ denotes its Adelic completion (i.e. the rationalization of its $\Z$-completion), and the space $\BFT\hat{}_{\Q_p}$ denotes the rationalization of its $p$-completion. Similarly, the space $\BFFT \hat{}_{\Q_q}$ denotes the rationalization of the completion of $\FFT$, at all primes $q \neq p$. The bottom horizontal map $\BFT_\Q  \longrightarrow \BFT\hat{}_\Q$ is the natural map from the rationalization of $\BFT$ to the Adelic completion of $\BFT$. 

\medskip
\noindent
It is easy to see that both vertical maps in the above diagram are homotopy equivalences. The merit of replacing $\BN(T)_\Q$ by the equivalent space $\BN(T)_\psi$ is that we obtain a natural automorphism $\psi : \BN(T)_\psi \longrightarrow \BN(T)_\psi$ that interpolates the self map $\psi_p$ of $\BFT$ induced by the degree $p$ map on $\To$, and the self map of $\BFFT$ induced by the Frobenius $\mbox{F}_p$ on $\overline{\F}_p$ on the respective vertices of the diagram. 

\medskip
\noindent
Now define the space $\BH_R(A)$ as the homotopy colimit:
\[ \BH_R(A) := \hocolim_{J \in \gC(A)} \BH_{J,R}(A). \]

\medskip
\noindent
One also has a commutative diagram:
\[
\xymatrix{
\BH_{\mathbb{W}(\overline{\F}_p)}(A)\hat{}_q \times \BFT\hat{}_p    \ar[d]^{\phi_2} \ar[r]& \BG(A)\hat{}_q \times \BG(A) \hat{}_p \ar[d] \\
\BH_{\mathbb{W}(\overline{\F}_p)}(A)\hat{}_{\Q_q} \times \BFT\hat{}_{\Q_p}  \ar[r]& \BG(A)\hat{}_{\Q_q} \times \BG(A) \hat{}_{\Q_p},}
\]
where the vertical maps are given by rationalization, and the horizontal maps are induced by the inclusion $\mathbb{W}(\overline{\F}_p) \subset \C$. The homotopy decoposition of $\BK(A)$, and classical results from \cite{FM} imply that the horizontal maps are homotopy equivalences (see remark \ref{levi} below). In particular, the space $\BH_{\mathbb{W}(\overline{\F}_p)}(A)\hat{}_q \times \BFT\hat{}_p$ is a model for $\BG(A)\hat{}$ that admits a self map $\psi$ induced by the Frobenius $\mbox{F}_p$ on $\Gr_{\mathbb{W}(\overline{\F}_p)}(A)$, and the self map of $\BFT$ induced by the degree $p$ map on $\To$. 

\medskip
\noindent
Finally, define the pullback:
\[
\xymatrix{
\BG(A)_{\psi} \ar[d] \ar[r] & \BH_{\mathbb{W}(\overline{\F}_p)}(A)\hat{}_q \times \BFT\hat{}_p \ar[d]^{\phi_2} \\
\BN(T)_\psi \ar[r]^{\phi_1 \quad \quad \quad \quad} & \BH_{\mathbb{W}(\overline{\F}_p)}(A)\hat{}_{\Q_q} \times \BFT\hat{}_{\Q_p}.}
\]
By construction, the above pullback is equivalent to the arithmetic fracture square for $\BG(A)$. In other words, $\BG(A)_\psi$ is homotopy equivalent to $\BG(A)$, which we know to be homotopy equivalent to $\BK(A)$. Furthermore, by construction, $\BG(A)_\psi$ supports a self map $\psi$ interpolating the corresponding self maps on the vertices of the above diagram. 

\medskip
\noindent
Notice that the above constructions are natural with respect to the root data defined by $A$. In particular, the map $\psi$ restricts to a self map of $\BH_J(A)$. This restriction is indeed equivalent to the unstable Adams operation $\psi_J$ on $\BK_J(A)$. 

\begin{remark} \label{levi}
It is more natural to use the space $\BG_{\mathbb{W}(\overline{\F}_p)}(A)$ instead of $\BH_{\mathbb{W}(\overline{\F}_p)}(A)$ in the above argument. However, showing that $\BG_{\mathbb{W}(\overline{\F}_p)}(A) \hat{}_q$ is homotopy equivalent to $\BG(A)\hat{}_q$ would require proving the acyclicity of certain parabilic unipotent subgroups in $\Gr_{\mathbb{W}(\overline{\F}_p)}(A)$ away from $p$. This is indeed true but requires an involved proof; see \cite{F} for details. We circumvent these issues here by working with the spaces $\BH_{\mathbb{W}(\overline{\F}_p)}(A)$ instead. 
\end{remark}

\bigskip
\noindent
\noindent\section{The $q$-local unstable Adams Operation} \label{local}

\bigskip
\noindent
In this section, we prove a local version of the unstable Adams operation that was constructed in the previous section. The argument is essentially identical to the one used earlier. Indeed, the construction of the local unstable Adams operation given below is implicit in the construction of the global one from the previous section:

\begin{thm} \label{main} Let $\K(A)$ denote a Kac-Moody group corresponding to the Cartan matrix $A$. Let $p$ and $q$ be distinct primes. Then there exists a family of unstable Adams operations for the prime $p$: $\psi_J: \BK_J(A)_{(q)} \longrightarrow \BK_J(A)_{(q)}$ which are compatible (not just up to homotopy) with respect to the inclusions $J \leq L \leq I$. In particular, one has a map $\psi := \psi_I : \BK(A)_{(q)} \longrightarrow \BK(A)_{(q)}$ compatible with the maps $\psi_J$ under the inclusion $\BK_J(A)_{(q)} \longrightarrow \BK(A)_{(q)}$. 
\end{thm}

\bigskip
\noindent
Let $p$ be a prime. Working with the fixed embedding of $\mathbb{W}(\overline{\F}_p)$ in the complex numbers $\C$ chosen in the previous section, one obtains maps:
\[ \Hg_{J, \mathbb{W}(\overline{\F}_p)}(A) \longrightarrow \Hg_{J, \C}(A) \longrightarrow \Hg_J(A), \]
where $\Hg_J(A)$ denotes the group $\Hg_{J,\C}(A)$ with the analytic topology, so that the second map above is a continuous bijection of groups.

\medskip
\noindent
As pointed out earlier, Tits \cite{T} has constructed groups $\tilde{\W}_{J.R}(A)$ that lifts the Weyl group in $\Hg_{J, R}(A)$. Moreover, this construction is functorial in the ring $R$ and the root datum $A$. We use this to define the group $\FFT \subset \Hg_{J, \mathbb{W}(\overline{\F}_p)}(A)$ to be the group generated by $\tilde{\W}_{J, \mathbb{W}(\overline{\F}_p)}(A)$, and the torsion subgroup of the (common) maximal torus $\To_{\mathbb{W}(\overline{\F}_p)}\subset \Hg_{J, \mathbb{W}(\overline{\F}_p)}(A)$. Notice that the $\FFT$ maps naturally to the normalizer $\No_J(\To)$ of $\To$ in $\Hg_J(A)$ under the chosen embedding. In addition, the Frobenius map agrees with the degree $p$-map on $\To$, through this embedding.

\medskip
\noindent
Let us now define the space $\BN(T)_{J,\psi}$ via the pullback:
\[
\xymatrix{
\BN(T)_{J,\psi}    \ar[d] \ar[r]^{\quad \phi_1  \quad \quad} & \BFFT \hat{}_{\Q_q}  \ar[d] \\
\BFT_\Q    \ar[r] & \BFT \hat{}_{\Q_q}.}
\]

\medskip
\noindent
Notice that both vertical maps in the above diagram are homotopy equivalences. The merit of replacing $\BFT_\Q$ by the equivalent space $\BN(T)_{J,\psi}$ is that we obtain a natural automorphism $\psi_J : \BN(T)_{J,\psi} \longrightarrow \BN(T)_{J,\psi}$ that interpolates the self map of $\BFT$ induced by the degree $p$ map on $\To$, and the self map of $\BFFT$ induced by the Frobenius $\mbox{F}_p$ on $\overline{\F}_p$ on the respective vertices of the diagram. 

\medskip
\noindent
Now, define the pullback:
\[
\xymatrix{
\BH_J(A)_{\psi} \ar[d] \ar[r] & \BH_{J,\mathbb{W}(\overline{\F}_p)}(A)\hat{}_q  \ar[d] \\
\BN(T)_{J,\psi} \ar[r]^{\quad \phi_1 \quad \quad \quad} & \BH_{J,\mathbb{W}(\overline{\F}_p)}(A)\hat{}_{\Q_q} .}
\]
The results of Friedlander-Mislin \cite{FM} can be used to show that the above pullback is equivalent to the arithmetic fracture square for $\BH_J(A)_{(q)}$ (see \cite{F}). In other words, $\BH_J(A)_\psi$ is homotopy equivalent to $\BH_J(A)_{(q)}$, which we know to be homotopy equivalent to $\BK_J(A)_{(q)}$. Furthermore, by construction, $\BH_J(A)_\psi$ supports a self map $\psi_J$ interpolating the corresponding self maps on the vertices of the above diagram. 

\medskip
\noindent
Again invoking Tits \cite{T}, we see that the above constructions are natural with respect to the root data defined by $A$. In other words, $\psi_J$ extends to the map $\psi_L$ under the inclusion $\BH_J(A)_{(q)} \rightarrow \BH_L(A)_{(q)}$ induced by $J \leq L \leq I$. Furthermore, all maps $\psi_J$ are restrictions of the global map $\psi := \psi_I$ acting on $\BH_I(A)_{(q)} = \BG(A)_{(q)}$, which is equivalent to $\BK(A)_{(q)}$. This proves theorem \ref{main}

\bigskip
\begin{remark}
A $q$-complete version of theorem \ref{main} was proven by J. Foley (see Theorems A, B in \cite{F}). The $q$-local version given above requires a little more work in glueing in the rational information along the fracture square. 
\end{remark}

\medskip
\noindent
\section{The Bousfield--Kan spectral sequence} \label{BKSS}

\medskip
\noindent
By the naturality of the construction of $\psi$ on $\BK(A)$, we get an induced action $\psi^\ast$ on the Bousfield--Kan spectral sequence with coefficients in the ring $\Z_{(q)}$ for some prime $q$. 

\medskip
\noindent
Now the $E_1$-term of the Bousfield--Kan spectral sequence has an explicit description:
\[ E_1^{i,j} = \bigoplus_{J_1 < \cdots < J_i} \Hr^j(\BK_{J_1}(A), \Z_{(q)}), \]
where we have used the standard simplicial resolution of $\gC(A)$ to calculate the derived functors of ${\lim}^0$. Therefore, if we assume that $\Hr^\ast(\BK_J(A), \Z_{(q)})$ has no $q$-torsion for all $J \in \gC(A)$, then the above group injects into its rationalization, and we observe that the action of $\psi^\ast$ on $E_s^{i,2j}$ is given by multiplication by $p^j$. Notice that if $R$ is any $\Z_{(q)}$-module, the lack of torsion implies that $\Hr^\ast(\BK_J(A), R) = \Hr^\ast(\BK_J(A), \Z_{(q)}) \otimes R$. In particular, the action of $\psi^\ast$ has the same description with coefficients in $R$.

\bigskip
\begin{remark}
It is an interesting question to ask if the unstable Adams operations for a prime $p$ are semi-simple in cohomology, with coefficients in a field. In other words, one would like to know if $\psi^\ast : \Hr^\ast(\BK(A), \F) \longrightarrow \Hr^\ast(\BK(A), \F)$ can be diagonalized into its eigenspaces. This question seems non-trivial even in characteristic zero. 
\end{remark}

\medskip
\noindent
{\bf The proof of theorem \ref{T1}}

\medskip
\noindent
In the absence of $q$-torsion, the groups $\Hr^\ast(\BK_J(A), \Z_{(q)})$ are evenly graded. Hence the differential $d_k$ is trivial for degree reasons if $k$ is even. Now let $\psi$ denote the $q$-local unstable Adams operation from theorem \ref{main}. Since $\psi^\ast$ commutes with the differentials, we have the equalities for $x \in E_{2r-1}^{i,2j}$:
\[ p^{j-r+1} d_{2r-1}(x) = \psi^\ast d_{2r-1}(x) = d_{2r-1} \psi^\ast(x) = d_{2r-1} p^j(x) = p^j d_{2r-1}(x). \]
In particular, for any prime $p$ so that $\W(A)$ has no elements of $p$-torsion, we have:
\[ p^{j-r+1}(p^{r-1}-1) d_{2r-1} (x) = 0. \]
Let us pick a prime $p$ that generates the cyclic group $(\Z/q)^\times$ and such that there are no elements of $p$-torsion in $\W(A)$. Then the above equality implies that $(q-1)$ must divide $(r-1)$. This proves theorem \ref{T1}. 

\begin{remark}
The above argument shows that the image of a non-zero differential consists of torsion elements with bounded exponent, and it even yields an upper bound on this exponent. 
\end{remark}

\medskip
\noindent
{\bf The proof of corollary \ref{C1}}

\medskip
\noindent
We now move to the proof of corollary \ref{C1}. The only statement that requires proof is the claim that there are no additive extension problems in the Bousfield--Kan spectral sequence. Let $q$ be a prime so that $2q \geq n+1$. In particular, the Bousfield--Kan spectral sequence with coefficients in any $\Z_{(q)}$-module collapses. In addition, assume that $\Hr^\ast(\BK_J(A),\Z_{(q)})$ has no $q$-torsion. Hence the the $E_1$-term of the Bousfield--Kan spectral sequence described above is a cochain complex of finitely generated free $\Z_{(q)}$-modules. The universal coefficient theorem applies to give us a short exact sequence:
\[ ({\lim}^i \Hr^\ast(\BK_J(A), \Z_{(q)})) \otimes \F_q \longrightarrow {\lim}^i \Hr^\ast(\BK_J(A), \F_q) \longrightarrow \Tor({\lim}^{i+1} \Hr^\ast(\BK_J(A), \Z_{(q)}), \F_q).\]
Summing over all indices $i$, we get a short exact sequence:
\[ 0 \rightarrow \bigoplus_{i+j=k} E_{\infty}^{i,j}(\Z_{(q)}) \otimes \F_q \longrightarrow \Hr^k(\BK(A),\F_q) \longrightarrow \bigoplus_{i+j=k+1} \Tor(E_{\infty}^{i,j}(\Z_{(q)}), \F_q) \rightarrow 0, \]
where $E_{\infty}^{i,j}(\Z_{(q)})$ denotes the $E_{\infty} = E_{2}$-term in the Bousfield--Kan spectral sequence with coefficients in $\Z_{(q)}$. We compare the above short exact sequence to the sequence:
\[ 0 \rightarrow \Hr^k(\BK(A), \Z_{(q)}) \otimes \F_q \longrightarrow \Hr^k(\BK(A), \F_q) \longrightarrow \Tor(\Hr^{k+1}(\BK(A), \Z_{(q)}), \F_q) \rightarrow 0. \]
Notice that any nontrivial extension in $E_{\infty}$ in total degree $k$, would strictly increase the dimension of the terms $E_{\infty}(\Z_{(q)}) \otimes \F_q$ in comparison with $\Hr^k(\BK(A), \Z_{(q)}) \otimes \F_q$. Similarly, it would also increase the dimension of $ \Tor(E_{\infty}(\Z_{(q)}), \F_q)$. This would contradict the fact that both short exact sequences have identical middle terms. We conclude that there can be no additive extensions, proving corollary \ref{C1}. 

\medskip
\noindent
{\bf The proof of corollary \ref{C2}}

\medskip
\noindent
The next corollary is \ref{C2}. Assume that $\Hr^\ast(\BK_J(A), R)$ has no $q$-torsion, and that the Bousfield--Kan spectral sequence with coefficients in a $\Z_{(q)}$-module $R$ collapses. Consider the restriction map $\Hr^\ast(\BK(A), R) \longrightarrow \Hr^\ast(\BT, R)$. It is clear that this map factors through the surjective edge homomorphism: $\Hr^\ast(\BK(A), R) \longrightarrow {\lim}^0 \Hr^\ast(\BK_J(A), R)$. Since our assumptions ensure that the restriction map $\Hr^\ast(\BK_J(A),R) \longrightarrow \Hr^\ast(\BT, R)^{\W_J(A)}$ is an isomorphism \cite{Fe}, we have the sequence of equalities:
\[  {\lim}^0 \Hr^\ast(\BK_J(A), R) =  \bigcap_{J \in \gC(A)} \Hr^\ast(\BT, R)^{\W_J(A)} = \bigcap_{i \in I} \Hr^\ast(\BT, R)^{\langle r_i \rangle} = \Hr^\ast(\BT,R)^{\W(A)}. \]

\noindent
Now notice that the kernel of the restriction map is supported in the columns $E_2^{i,j}$ with $i >0$. Since the Bousfield--Kan spectral sequence has a finite number of columns, its multiplicative structure implies that any element supported on a non-zero column must be nilpotent. In fact, any $n$-fold product of such elements is trivial, where $n$ is the size of the generalized Cartan matrix. This proves \ref{C2}}. 

\medskip
\noindent
{\bf The proof of corollary \ref{C3}}

\medskip
\noindent
Finally, we come to corollary \ref{C3}. Recall that the homotopy orbits of the $\W(A)$-action on the Davis complex $D(A)$ furnished us with a homotopy decomposition of $\BW(A)$ in terms of $\BW_J(A)$. One has a Bousfield--Kan spectral sequence for this decomposition, with coefficients in any $\W(A)$-module. More precisely, given a $\W(A)$-module $M$, we may construct a bundle of parametrized spectra over $\BW(A)$ with fibers being the Eilenberg-MacLane spectrum with coefficients in $M$. The space of global sections of this parametrized spectrum is a co-simplicial space dual to the canonical simplicial structure induced by the homotopy decomposition of $\BW(A)$. The homotopy groups of the space of sections is precisely the cohomology groups of $\W(A)$ with coefficients in $M$. Invoking the spectral sequence that converges to the homotopy groups of a co-simplicial space gives rise to the Bousfield--Kan spectral sequence:
\[ E_2^{i,j} = {\lim}^i \Hr^j(\W_J(A), M) \Rightarrow \Hr^{i+j}(\W(A), M). \]
Now assume that $M$ is a $\Z_{(q)}$-module, where $q$ is a prime so that there are no elements of $q$-torsion in $\W(A)$. In particular, $q$ is prime to the order of $\W_J(A)$ for all $J \in \gC(A)$. It follows that the above spectral sequence collapses to the invariants to give:
\[ \Hr^k(\W(A), M) = {\lim}^k \, M^{\W_J(A)}. \]
Let us apply this observation to the Serre spectral sequence, with coefficients in a $\Z_{(q)}$-module $R$, for the fibration:
\[ \BT \longrightarrow \BN(T) \longrightarrow \BW(A). \]
We know that the $E_2$-term is given by $\Hr^i(\W(A), \Hr^j(\BT, R))$. Assuming there are no elements in $\W(A)$ of $q$-torsion, we have:
\[ E_2^{i,j} = {\lim}^i \Hr^j(\BT, R)^{\W_J(A)}. \]
Under the assumptions on the prime $q$, $\Hr^j(\BT, R)^{\W_J(A)}$ is well known to be naturally isomorphic to $\Hr^j(\BK_J(A), R)$. It follows that the right hand side of the $E_2$-term above is isomorphic to the $E_2^{i,j}$-term of the Bousfield--Kan spectral sequence computing $\Hr^*(\BK(A), R)$. 

\smallskip
\noindent
Now recall from \cite{Ki2, Ki3} that for a prime $q$ that does not occur in the torsion of $\W(A)$, the localization of $\BN(T)$ agrees with that of $\BK(A)$. Therefore we notice that both spectral sequences converge to the same groups. Hence the collapse of one must imply the collapse of the other.

\medskip
\noindent
{\bf The Bousfield--Kan spectral sequence for the topological Tits building}

\medskip
\noindent
In this article, we have studied the Bousfield--Kan spectral sequence induced by the homotopy decomposition for $\BK(A)$. Recall that this homotopy decomposition was obtained by taking homotopy orbits with respect to the $\K(A)$-action on the topological Tits building $X(A)$. One may as well ask for the structure of the Bousfield--Kan spectral sequence for $X(A)$:
\[ E_2^{i,j} = {\lim}^i \Hr^j(\K(A)/\K_J(A), R) \Rightarrow \Hr^{i+j}(X(A), R). \]
For this spectral sequence, we have:
\begin{claim} \label{BKX}
Let $R$ denote any coefficients, then the terms $E_2^{i,j}$ in the Bousfield--Kan spectral sequence converging to $\Hr^\ast(X(A), R)$ are all zero for $i+j >0$. 
\end{claim}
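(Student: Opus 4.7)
The plan is to decompose the $E_2$-page by the Schubert structure on each $\K(A)/\K_J(A)$ and show that all contributions vanish except a single copy of $R$ at $(0,0)$. Contractibility of $X(A)$ (already established) gives the required vanishing at $E_\infty$, but we need the stronger statement at $E_2$. The even-cell decomposition indexed by $\W(A)/\W_J(A)$ yields $\Hr^*(\K(A)/\K_J(A), R) = \bigoplus_{w \in \W^J} R\cdot [w]^*$ with $[w]^* \in \Hr^{2\ell(w)}$, and restrictions for $J \subseteq L$ send $[w]^*$ to $[w]^*$ when $w \in \W^L$ and to $0$ otherwise. Define $\phi_w : \gC(A)^{op} \to \mathrm{Ab}$ by $\phi_w(J) = R$ if $w \in \W^J$, else $0$. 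Since only finitely many Weyl elements have a prescribed length, $E_2^{i,j} = \bigoplus_{2\ell(w)=j} {\lim}^i \phi_w$. The functor $\phi_1$ is the constant $R$, and since $\gC(A)$ has initial object $\varnothing$ we get ${\lim}^0 \phi_1 = R$, ${\lim}^{i>0} \phi_1 = 0$, producing $E_2^{0,0} = R$. Everything then reduces to showing ${\lim}^* \phi_w = 0$ for every $w \neq 1$.

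For such $w$, I would use the identification $w \in \W^J \Leftrightarrow J \cap D(w) = \varnothing$, where $D(w) = \{i \in I : \ell(wr_i) < \ell(w)\}$ is the right descent set. Fit $\phi_w$ into the short exact sequence of functors on $\gC(A)^{op}$
\[ 0 \to \phi_w \to \underline{R} \to \eta_w \to 0, \]
where $\eta_w(J) = R$ precisely when $J$ lies in the upward-closed subposet $U_w = \{J \in \gC(A) : J \cap D(w) \neq \varnothing\}$ and $\eta_w(J) = 0$ otherwise. Because $U_w$ is upward closed in $\gC(A)$, a chain $J_0 \geq \cdots \geq J_n$ in $\gC(A)$ with $J_n \in U_w$ must lie entirely in $U_w$, so the cochain complex computing ${\lim}^* \eta_w$ coincides with the simplicial cochain complex of the order complex $|N(U_w)|$. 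Hence ${\lim}^* \eta_w \cong \Hr^*(|N(U_w)|, R)$.

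The final step is to verify that $|N(U_w)|$ is contractible via the nerve lemma applied to the cover $U_w = \bigcup_{d \in D(w)} C_d$ with $C_d = \{J \in \gC(A) : d \in J\}$. Each intersection $\bigcap_{d \in \sigma} C_d = \{J : \sigma \subseteq J\}$ is either empty (when $\sigma \notin \gC(A)$, noting that $\gC(A)$ is closed under subsets) or has initial object $\sigma$ and is contractible, so the nerve of the cover is the simplicial complex on vertex set $D(w)$ whose faces are the spherical subsets of $D(w)$. The chief obstacle---and the one essential input from Coxeter theory---is the classical fact that $\W_{D(w)}$ is always a finite parabolic subgroup, i.e. $D(w) \in \gC(A)$; granting this the nerve is the full simplex on $D(w)$, which is contractible. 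Therefore ${\lim}^* \eta_w = R$ in degree $0$, with the natural comparison $R = {\lim}^0 \underline{R} \to {\lim}^0 \eta_w$ an isomorphism, and the long exact sequence associated to the short exact sequence forces ${\lim}^* \phi_w = 0$ for all $w \neq 1$, completing the proof.
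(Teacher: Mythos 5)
Your proposal is correct, but it proves the claim by a genuinely different route than the paper. The paper's proof is global and geometric: since each $\K(A)/\K_J(A)$ has only even cells and the projections are cellular, the homotopy colimit $X(A)$ inherits a CW structure whose cellular cochain complex is exactly the $E_1$-term (the internal cellular differentials vanish by evenness, leaving only the simplicial direction), so $E_2 \cong \Hr^\ast(X(A),R)$, which vanishes in positive degrees because $X(A)$ is contractible -- a fact quoted from earlier in the paper. You instead work entirely inside the indexing poset: you split the coefficient functor $J \mapsto \Hr^j(\K(A)/\K_J(A),R)$ into rank-one Schubert pieces $\phi_w$ (legitimate, since the projections send dual Schubert classes to dual Schubert classes and only finitely many $w$ have a given length), and compute ${\lim}^\ast\phi_w$ via the short exact sequence with the constant functor and the extension-by-zero $\eta_w$ on the upward-closed support $U_w$, identifying ${\lim}^\ast\eta_w$ with the cohomology of the order complex of $U_w$ and collapsing that complex by the nerve lemma. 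Your essential input is the classical Coxeter-theoretic fact that the descent set $D(w)$ is spherical, i.e. $\W_{D(w)}(A)$ is finite; this is precisely the combinatorial mechanism underlying the contractibility of the Davis complex and hence of $X(A)$, so in effect you re-derive the needed contractibility rather than quoting it. What your argument buys is independence from the contractibility of $X(A)$ and a finer statement -- the vanishing of ${\lim}^i\phi_w$ for every $w \neq 1$ separately, hence an explicit functor-by-functor picture of the $E_2$-term -- at the cost of more bookkeeping (checking that the cochain complex computing ${\lim}^\ast$ of the quotient functor really is the simplicial cochain complex of $N(U_w)$, which works because $U_w$ is upward closed and the values sit at the smallest subset in each chain); the paper's argument is shorter and reuses known geometry. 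Both are sound.
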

\begin{proof}
Recall that we may describe the spaces $\K(A)/K_J(A)$ as CW complexes with only even cells (see the section with background). Furthermore, the projection maps given by $\K(A)/\K_J(A) \longrightarrow \K(A)/\K_L(A)$ are cellular for an inclusion $J < L$ in $\gC(A)$. It follows that the homotopy colimit $X(A)$ has a canonical CW structure induced from the cellular diagram. One checks that the $E_1$-term in the Bousfield--Kan spectral sequence can be identified with the cellular cochain complex for $X(A)$, and therefore the $E_2$-term is the singular cohomology of $X(A)$, with coefficients in $R$. This cohomology is trivial since $X(A)$ is contractible. 
\end{proof}

\pagestyle{empty}
\bibliographystyle{amsplain}
\providecommand{\bysame}{\leavevmode\hbox
to3em{\hrulefill}\thinspace}

\end{document}